\newtheorem{thm}{Theorem}
\newtheorem{prop}{Proposition}
\theoremstyle{definition}
\newtheorem{remark}{Remark}
\begin{document}
%\title{}
\title{A Spectral Generalization of Von Neumann Minimax Theorem}

\author{Bahman Kalantari \\
Department of Computer Science, Rutgers University, Piscataway, NJ 08854\\
kalantari@cs.rutgers.edu}
\date{}
\maketitle

%\begin{quote} This article
%\end{quote}
\begin{abstract}
Given $n \times n$ real symmetric matrices $A_1, \dots, A_m$, the following {\it spectral minimax} property holds:
$$\min_{X \in \mathbf{\Delta}_n} \max_{y \in S_m} \sum_{i=1}^m  y_iA_i \bullet X=\max_{y \in S_m} \min_{X \in \mathbf{\Delta}_n} \sum_{i=1}^m y_iA_i \bullet X,$$
where $S_m$ is the simplex and $\mathbf{\Delta}_n$ the spectraplex. For diagonal $A_i$'s this reduces to the classic minimax.
\end{abstract}

{\bf Keywords:} Von Neumann Minimax,  Linear Programming,  Duality, Semidefinite Programming

%\newpage

%\section{Duality for Game Theory}

\section{Introdution}  Von Neumann minimax theorem in \cite{Von} is a classic result in game theory: Given  an $m \times n$ real matrix $A$,

\begin{equation} \label{Neumann0}
\min_{x \in S_n} \max_{y \in S_m} y^TAx= \max_{y \in S_m} \min_{x \in S_n} y^TAx,
\end{equation}
where $S_k=\{u \in \mathbb{R}^k: \sum_{i=1}^k u_i=1, u \geq 0\}$, the unit simplex.
Historic remarks on the proof of the theorem and its connections with linear programming duality is given in Schrijver \cite{Sch}.  In this note we prove a spectral generalization of the minimax theorem for a finite set of real symmetric matrices.  In particular, in the case of diagonal matrices the theorem reduces to the following alternate yet equivalent statement of the minimax:

\begin{thm} {\rm (Von Neumann Minimax)} Given $a_1, \dots, a_m \in \mathbb{R}^n$,
\begin{equation} \label{von1}
\min_{x \in S_n} \max_{y \in S_m} \sum_{i=1}^m  y_ia_i^Tx=\max_{y \in S_m} \min_{x \in S_n} \sum_{i=1}^m y_ia_i^Tx.
\end{equation}
\end{thm}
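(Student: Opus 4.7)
The plan is to reduce (\ref{von1}) to the standard extreme-point form of the classical minimax and then invoke strong LP duality, as alluded to by the reference to Schrijver \cite{Sch}.

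First I would observe that the inner problems on both sides are linear optimizations over a simplex, hence attain their optima at vertices. Concretely, for fixed $x \in S_n$, the function $y \mapsto \sum_{i=1}^m y_i (a_i^T x)$ is linear in $y$, so $\max_{y \in S_m} \sum_i y_i a_i^T x = \max_{1 \le i \le m} a_i^T x$. Symmetrically, $\min_{x \in S_n} \sum_i y_i a_i^T x = \min_{1 \le j \le n} \bigl(\sum_i y_i a_i\bigr)_j$. Letting $A$ denote the $m \times n$ matrix whose $i$-th row is $a_i^T$, the identity (\ref{von1}) becomes the familiar statement
$$\min_{x \in S_n} \max_{1 \le i \le m} (Ax)_i \;=\; \max_{y \in S_m} \min_{1 \le j \le n} (A^T y)_j.$$

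Next I would cast each side as a linear program by introducing a scalar auxiliary variable. The left-hand side equals the optimal value of
$$\min\,\{\,t : Ax - t\mathbf{1}_m \le 0,\; \mathbf{1}_n^T x = 1,\; x \ge 0\,\},$$
with variables $(x,t)$, while the right-hand side equals
$$\max\,\{\,s : A^T y - s\mathbf{1}_n \ge 0,\; \mathbf{1}_m^T y = 1,\; y \ge 0\,\},$$
with variables $(y,s)$. A direct application of the standard rules for LP dualization shows that these two programs are duals of one another: the multiplier for the constraint $Ax \le t\mathbf{1}_m$ becomes $y \ge 0$, the multiplier for $\mathbf{1}_n^T x = 1$ becomes the free variable $s$, and the primal constraint $\mathbf{1}_m^T y = 1$ arises from stationarity in $t$.

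Finally, both LPs are feasible (uniform distributions on $S_n$ and $S_m$, with $t$ and $s$ chosen large enough in absolute value) and bounded (since the objectives are bounded by $\max_{i,j}|A_{ij}|$), so strong LP duality yields the equality of optimal values and hence (\ref{von1}). The only delicate step I anticipate is bookkeeping in the dualization — matching signs, equality vs inequality constraints, and free vs nonnegative variables — but no genuine obstacle arises, since the two programs are standard primal/dual pairs for matrix games.
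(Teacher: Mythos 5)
Your proof is correct and is essentially the paper's own argument in its diagonal special case: the paper does not prove Theorem 1 separately but presents it as the diagonal instance of the Spectral Minimax Theorem, whose proof likewise rewrites each side with an auxiliary scalar variable ($\delta$ and $t$ there, your $t$ and $s$), exhibits the two programs as a primal--dual pair, and concludes via strong duality after checking feasibility. Your dualization bookkeeping checks out, and since LP strong duality requires only feasibility of both programs (no interior-point condition, unlike the SDP setting of the paper's general proof), your argument is sound as written.
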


The minimax theorem can thus be seen as a mathematical statement on a set of $m$ vectors in $\mathbb{R}^n$.  In the next section we first give  basics on symmetric matrices, semidefinite programming and its duality. We then give statement and proof of the spectral minimax.  Here we are not concerned with any game theoretic implications of the theorem, rather the result can be viewed as a statement on a set of $m$ real $n \times n$ symmetric matrices, where the role of linear programming duality is replaced with semidefinite programming duality.

\section{Spectral Minimax Theorem} \label{sec2}
Let $\mathbb{S}^n$ denote the set of $n \times n$ real symmetric matrices. For a symmetric matrix $X$ the notations $X \succeq 0$ and $X \succ 0$ mean $X$ is positive semidefinite and  positive definite, respectively. The inner product in $\mathbb{S}^n$, also called Frobenious inner product, is denoted by any of the following equivalent notations
\begin{equation}  \label{eeq3}
\langle X, Y \rangle_F=Tr(XY)=X \bullet Y = \sum_{i=1}^n \sum_{j=1}^n x_{ij} y_{ij}.
\end{equation}

The {\it primal} semidefinite programming problem refers to the following optimization
\begin{equation} \label{primal}
\inf \big \{C \bullet X: A_i \bullet X = b_i, i=1, \dots, m, X \succeq 0 \big\},
\end{equation}
where $C, A_i \in \mathbb{S}^n$, $b_i \in \mathbb{R}$. The {\it dual} of (\ref{primal}) is
\begin{equation} \label{dual}
\sup \big \{\sum_{i=}^m b_i y_i:  \sum_{i=1}^m y_iA_i + S=C, S \succeq 0 \big \}.
\end{equation}
It is easy to show that given any feasible solution $X$ of (\ref{primal}) and any feasible solution $(y, S)$ of (\ref{dual}), $C \bullet X \geq \sum_{i=1}^m b_i y_i$. Furthermore, it is well known that in semidefinite programming, as a  conic linear programming, if there exists a feasible $X \succ 0$ and feasible $(y,S)$ with $S \succ 0$, the optimal objective value of  both problems are attained and equal (see \cite{NN}).  We shall make use of this property.

The spectral analogue of the unit simplex, called {\it spectraplex} (see \cite{Parrilo}) is the set
\begin{equation}  \label{eeq5}
\mathbf{\Delta}_n= \big \{X \in \mathbb{S}^n:  Tr(X)=1,  X \succeq 0  \big \}.
\end{equation}

\begin{prop}  \label{props1} Given $A \in \mathbb{S}^n$, if $\lambda_{\rm min}(A)$ is its minimum eigenvalue, then
\begin{equation} \label{lem1eq1}
\min \big \{A \bullet X :  X \in \mathbf{\Delta}_n \big \}=\lambda_{\rm min}(A).
\end{equation}
\end{prop}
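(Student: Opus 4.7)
The plan is to prove the two inequalities separately using the spectral decomposition of $A$.

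First, I would establish the lower bound $A \bullet X \geq \lambda_{\min}(A)$ for every $X \in \mathbf{\Delta}_n$. Since $A \in \mathbb{S}^n$, I would write its spectral decomposition $A = Q \Lambda Q^T$, where $Q$ is orthogonal and $\Lambda = \mathrm{diag}(\lambda_1, \dots, \lambda_n)$ contains the eigenvalues of $A$. Using the cyclic property of the trace and the notation of (\ref{eeq3}), I would rewrite
\[
A \bullet X = \mathrm{Tr}(Q \Lambda Q^T X) = \mathrm{Tr}(\Lambda Q^T X Q) = \Lambda \bullet Y,
\]
where $Y = Q^T X Q$. The key observation is that conjugation by the orthogonal matrix $Q$ preserves positive semidefiniteness and trace, so $Y \in \mathbf{\Delta}_n$ as well. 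Since $Y \succeq 0$, its diagonal entries satisfy $y_{ii} \geq 0$, and $\mathrm{Tr}(Y) = 1$ gives $\sum_i y_{ii} = 1$. Therefore
\[
A \bullet X = \sum_{i=1}^n \lambda_i y_{ii}
\]
is a convex combination of the eigenvalues of $A$, which is bounded below by $\lambda_{\min}(A)$.

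Second, I would show the bound is attained. Let $v$ be a unit eigenvector of $A$ associated with $\lambda_{\min}(A)$ and set $X^* = vv^T$. Then $X^* \succeq 0$ and $\mathrm{Tr}(X^*) = v^T v = 1$, so $X^* \in \mathbf{\Delta}_n$, and
\[
A \bullet X^* = \mathrm{Tr}(A v v^T) = v^T A v = \lambda_{\min}(A).
\]
Combining the two halves yields (\ref{lem1eq1}).

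There is no real obstacle here; the argument is essentially a rewriting of the classical variational characterization of the minimum eigenvalue (Rayleigh's principle) in Frobenius-inner-product form. The only point requiring minor care is confirming that $Y = Q^T X Q$ lies in $\mathbf{\Delta}_n$, which follows immediately from orthogonal invariance of the trace and the congruence preservation of the PSD cone.
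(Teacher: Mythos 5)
Your proof is correct and follows essentially the same route as the paper: both conjugate by the orthogonal eigenvector matrix to reduce $A \bullet X$ to $\Lambda \bullet Y$ with $Y \in \mathbf{\Delta}_n$ and then read off the minimum over the diagonal case. You merely spell out the final step (convex combination of eigenvalues for the lower bound, $X^* = vv^T$ for attainment) that the paper leaves as an observation.
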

\begin{proof} Consider the spectral decomposition of $A$, $U\Lambda U^T$, where $\Lambda={\rm diag}(\lambda)$ is the diagonal matrix of eigenvalues and  $U=[u_1, \dots, u_n]$ the corresponding matrix of eigenvectors.
Given $X \in \mathbf{\Delta}_n$, let $Y=U^T X U$. Then $Y \in \mathbb{S}^n$. Also we have
\begin{equation}
Tr(Y)=Tr(X UU^T)=Tr(X), \quad Tr (A X) = Tr(U\Lambda U^T X )= Tr(\Lambda U^T X U) = Tr(\Lambda Y).
\end{equation}
In particular, $\{Y=U^T X U: X \in \mathbf{\Delta}_n\}=\mathbf{\Delta}_n$. From these and also observing that the minimum of $\Lambda \bullet Y$ over $\mathbf{\Delta}_n$ is $\lambda_{\min}(\Lambda)$ we get,
\begin{equation}
\min \big \{A \bullet X :  X \in \mathbf{\Delta}_n \big \}=
\min \big \{ \Lambda \bullet Y :  Y \in \mathbf{\Delta}_n \big \}= \lambda_{\min}(\Lambda)= \lambda_{\min}(A).
\end{equation}
\end{proof}

\begin{prop} \label{props2} Denoting the $n \times n$ identity matrix by $I_n$,
given any $A \in \mathbb{S}^n$, we have
\begin{equation}
\lambda_{\rm min}(A) =\max \big \{t:  A- tI_n \succeq 0 \big \}.
\end{equation}
\end{prop}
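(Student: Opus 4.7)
The cleanest plan is to reduce the problem to diagonal matrices via the spectral decomposition, which is the same device already used in the proof of Proposition \ref{props1}. Write $A = U \Lambda U^T$ with $\Lambda = \mathrm{diag}(\lambda_1,\dots,\lambda_n)$ and $U$ orthogonal. Since $U I_n U^T = I_n$, we have
\begin{equation}
A - t I_n = U(\Lambda - t I_n) U^T,
\end{equation}
and congruence by an orthogonal (in fact invertible) matrix preserves positive semidefiniteness. Thus $A - t I_n \succeq 0$ if and only if $\Lambda - t I_n \succeq 0$, and because $\Lambda - t I_n$ is diagonal this last condition is simply $\lambda_i - t \ge 0$ for all $i$, i.e.\ $t \le \lambda_{\min}(A)$.

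The maximum of $t$ subject to this constraint is therefore attained at $t = \lambda_{\min}(A)$, which gives the claimed equality. No step here is really an obstacle; the only thing to be careful about is recording that the supremum is actually achieved (the feasible set $\{t : A - tI_n \succeq 0\}$ is the closed interval $(-\infty, \lambda_{\min}(A)]$).

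Alternatively, and perhaps more in the spirit of the rest of the paper, one can derive Proposition \ref{props2} from Proposition \ref{props1} via semidefinite programming duality. The problem $\min\{A \bullet X : \mathrm{Tr}(X) = 1,\ X \succeq 0\}$ is a primal SDP of the form (\ref{primal}) with $m=1$, $A_1 = I_n$, $b_1 = 1$, and $C = A$; its dual (\ref{dual}) reads $\sup\{t : A - tI_n = S,\ S \succeq 0\} = \sup\{t : A - tI_n \succeq 0\}$. Slater's condition is satisfied on the primal side by $X = I_n/n \succ 0$ and on the dual side by $t = \lambda_{\min}(A) - 1$, so strong duality applies and both optima are attained. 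Combining this with Proposition \ref{props1} yields the identity. This second route has the advantage of foreshadowing the duality-based argument that will be needed for the full spectral minimax.
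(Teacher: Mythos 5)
Your first argument is correct and is essentially the paper's own proof: the paper simply asserts that $A - tI_n$ fails to be PSD exactly when $t > \lambda_{\min}(A)$, and your spectral-decomposition computation is the standard way to fill in that one-line claim, including the observation that the feasible set is the closed interval $(-\infty, \lambda_{\min}(A)]$ so the maximum is attained. Your alternative route via SDP duality applied to Proposition \ref{props1} is also logically sound (both Slater points you exhibit are valid), but it is heavier machinery than the paper intends for this elementary fact, and nothing in the paper's later argument depends on deriving Proposition \ref{props2} that way.
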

\begin{proof} It is easy to show $A- t I_n$ is not PSD if and only if $t > \lambda_{\rm min}(A)$.
\end{proof}

\begin{thm} {\rm (Spectral Minimax Theorem)} Given $A_1, \dots, A_m \in \mathbb{S}^n$,
\begin{equation} \label{main}
\min_{X \in \mathbf{\Delta}_n} \max_{y \in S_m} \sum_{i=1}^m  y_iA_i \bullet X=
\max_{y \in S_m} \min_{X \in \mathbf{\Delta}_n} \sum_{i=1}^m y_iA_i \bullet X.
\end{equation}
\end{thm}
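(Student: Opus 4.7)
The plan is to recognise both sides of (\ref{main}) as the optima of a primal--dual pair of semidefinite programs and to conclude by SDP strong duality. The inequality $\mathrm{LHS}\ge\mathrm{RHS}$ is the standard min--max tautology and requires nothing beyond the observation that $\sum_{i=1}^m y_iA_i\bullet X$ lies between its maximum over $y\in S_m$ and its minimum over $X\in\mathbf{\Delta}_n$.

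First I would reformulate each side as an explicit SDP. Since $y\mapsto\sum_i y_iA_i\bullet X$ is linear and $S_m$ is the convex hull of the standard basis vectors, the inner maximum on the LHS equals $\max_i A_i\bullet X$, giving
\[
\mathrm{LHS}=\inf\bigl\{\,s:X\in\mathbf{\Delta}_n,\;s-A_i\bullet X\ge 0\text{ for }i=1,\ldots,m\,\bigr\}.
\]
On the RHS, Proposition \ref{props1} turns the inner minimum into $\lambda_{\rm min}(\sum_i y_iA_i)$, and Proposition \ref{props2} rewrites this as a supremum, so
\[
\mathrm{RHS}=\sup\bigl\{\,t:y\in S_m,\;\textstyle\sum_i y_iA_i-tI_n\succeq 0\,\bigr\}.
\]

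Second, I would exhibit these two programs as a Lagrangian dual pair. Attaching multipliers $y_i\ge 0$ to the inequalities $s-A_i\bullet X\ge 0$ and a free $\lambda\in\mathbb{R}$ to $\mathrm{Tr}(X)=1$ in the primal, the Lagrangian simplifies to $s\bigl(1-\sum_i y_i\bigr)+\bigl(\sum_i y_iA_i+\lambda I_n\bigr)\bullet X-\lambda$. Minimising over the free scalar $s$ forces $\sum_i y_i=1$; minimising over $X\succeq 0$ forces $\sum_i y_iA_i+\lambda I_n\succeq 0$; the residual objective is $-\lambda$, and setting $t=-\lambda$ recovers the RHS formulation verbatim. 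To fit the standard primal--dual form (\ref{primal})--(\ref{dual}) recalled in Section \ref{sec2}, I would bundle the slacks $d_i=s-A_i\bullet X\ge 0$ as diagonal entries of a single block diagonal PSD matrix alongside $X$; this is bookkeeping, and in my view the one genuinely annoying step.

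Third, I would verify Slater's condition on both programs so that the SDP strong duality quoted in Section \ref{sec2} applies. The primal is strictly feasible at $X=I_n/n$ with $s$ any number strictly greater than $\max_i A_i\bullet X$, yielding $X\succ 0$ and strict inequalities. The dual is strictly feasible at the uniform $y=(1/m,\ldots,1/m)$ with $t<\lambda_{\rm min}(\tfrac{1}{m}\sum_i A_i)$, yielding $y>0$ and a positive definite slack matrix. Strong duality then gives $\mathrm{LHS}=\mathrm{RHS}$, which is (\ref{main}). The main obstacle, as indicated, is the formal SDP embedding of the mixed PSD/nonnegativity/free--scalar constraints; once that is in place, Slater is transparent and the conclusion is immediate.
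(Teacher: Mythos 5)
Your proposal is correct and follows essentially the same route as the paper: identify the two sides with the primal--dual SDP pair (\ref{LGvon})--(\ref{RGvon}), verify strict feasibility of both, and invoke conic strong duality. One small point in your favor: by leaving $s$ free rather than imposing $\delta \geq 0$, your Lagrangian forces $\sum_{i=1}^m y_i = 1$ directly, which sidesteps the paper's case split on the sign of $\delta_*$ and its scaling argument for tightening $\sum_{i=1}^m y_i \leq 1$ to an equality.
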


\begin{proof} For each fixed $X \in \mathbf{\Delta}_n$ it easy to see the LHS of (\ref{main}) is
\begin{equation}
\max \big \{\sum_{i=1}^m y_iA_i \bullet X: y \in S_m   \big \}= \max \big \{ A_i \bullet X: i=1, \dots, m\big \}.
\end{equation}
Thus the LHS of (\ref{main}) is equivalent to the following semidefinite programming whose infimum, by compactness of $\mathbf{\Delta}_n$, is attained
\begin{equation} \label{LGvon}
\delta_*=\min \big \{\delta : A_i \bullet X \leq \delta, i=1,\dots, m, X \in \mathbf{\Delta_n}\big \}.
\end{equation}
On the other hand, for each fixed $y \in S_m$  the RHS of (\ref{main}) is
\begin{equation} \label{eqs9}
\min \big \{(\sum_{i=1}^m y_iA_i) \bullet X: X \in \mathbf{\Delta}_n\big \}.
\end{equation}
Thus by Propositions \ref{props1} and \ref{props2} the RHS of (\ref{main}) is equivalent to
\begin{equation} \label{RGvon}
t_*=\max \big \{t:  \sum_{i=1}^m y_iA_i - tI_n \succeq 0, y \in S_m \big \}.
\end{equation}
We prove  (\ref{LGvon}) and (\ref{RGvon}) are primal-dual pair and $\delta_*=t_*$, hence
proving (\ref{main}).  Let us assume $\delta_* \geq 0$.   The case with $\delta_* <0$ can be handled analogously and will be omitted. Introducing slacks,  (\ref{LGvon}) can be written as
\begin{equation} \label{LGvonp}
\delta_*=\min \big \{\delta : A_i \bullet X + s_i -\delta =0, i=1,\dots, m, X \in \mathbf{\Delta}_n, s_i \geq 0, \delta \geq 0\big \}.
\end{equation}
We will rewrite (\ref{LGvonp}) in the primal form (\ref{primal}). In doing so, let $E^k_i$ denote the $k \times k$ matrix with $1$ as its $i$-th diagonal entry and all other entries zero. Now (\ref{LGvonp}) can be written as
\begin{equation} \label{LGvon2}
\delta_*=\min \big \{C' \bullet X':  A'_i \bullet X'=0,  i=1, \dots, m, E' \bullet X'=1, X' \succeq 0\big \},
\end{equation}
where all matrices lie in $\mathbb{S}^{n'}$, $n'=n+m+1$, and are defined as follows
\begin{equation} \label{matrices}
A'_i={\rm diag}(A_i, E^m_i,-1), \quad E'={\rm diag}(I_n, 0), \quad C'= E^{n'}_{n'}.
\end{equation}
From (\ref{dual}) the dual of (\ref{LGvon2}) is
\begin{equation} \label{RGvon2}
\max \big \{t: \sum_{i=1}^m y_i A'_i + t E' +S=C', S \succeq 0\big \}.
\end{equation}
We show  (\ref{RGvon2}) is equivalent to (\ref{RGvon}).  From the first set of $n$ linear equations in (\ref{RGvon2})  we get
\begin{equation} \label{eqno2}
\sum_{i=1}^m y_i A_i +t I_n +S^{(n)} =0,
\end{equation}
where $S^{(n)}$ denotes the top left $n \times n$ submatrix of $S$.
Since $S$ is positive semidefinite, so is $S^{(n)}$. Thus from (\ref{eqno2}) 
we may write
\begin{equation} \label{eqno2a}
\sum_{i=1}^m y_i A_i +t I_n \preceq 0.
\end{equation}
From the next $m$ equations in (\ref{RGvon2}) and the fact that diagonal entries of $S$ are nonnegative we get
\begin{equation} \label{eqno2b}
y_i \leq 0,  \quad i=1, \dots, m.
\end{equation}
Finally, consider the last equation in (\ref{RGvon2}).
Since the $(n',n')$ entry of $C'$ is $1$ and the corresponding entry of $S$, by virtue of its positive definiteness, is nonnegative,
the last  equation in (\ref{RGvon2}) gives
\begin{equation}  \label{y}
-\sum_{i=1}^m y_i \leq 1.
\end{equation}
Now changing  $y$ to $-y$, the three constraints (\ref{eqno2a}), (\ref{eqno2b}), (\ref{y}) give the following equivalent formulation of (\ref{RGvon2})
\begin{equation} \label{const}
\max \{t:  \sum_{i=1}^m y_i A_i - t I_n \succeq 0, \quad \sum_{i=1}^m y_i \leq 1, \quad y \geq 0\}.
\end{equation}
Next we argue that the constraint $\sum_{i=1}^m y_i \leq 1$ in (\ref{const}) can be replaced with $\sum_{i=1}^m y_i =1$. Otherwise, given  any optimal solution  by scaling $y$ we can increase the maximum value of $t$ subject to the constraints in (\ref{const}).  What remains to be verified is that both primal and dual problems have interior points.

For any positive definite $X \in \mathbf{\Delta}_n$ we can choose $\delta >0$ such that $A_i \bullet X < \delta$.  Thus in (\ref{LGvonp})
the slack   $s_i >0$, for all $i=1, \dots, n$.  This implies (\ref{LGvon2}) has an interior point $X'={\rm diag}(X,s_1, \dots, s_m, \delta)$.
Next we show  (\ref{RGvon2}) has a feasible solution ($y,t,S)$ with $S$ strictly positive definite.  To prove this we set $y_i =1/m$, $i=1, \dots, m$ and pick $t$ so that $\sum_{i=1}^m y_i A_i -t I_n$ is negative definite and define
\begin{equation}
S={\rm diag} \big (-\sum_{i=1}^m y_iA_i + t I_n, \frac{1}{m}I_m, 2 \big).
\end{equation}
$S$ is positive definite and from the definition of $A_i'$, $E'$ and $C'$ in
(\ref{matrices}) it follows that $(y,t,S)$ is a feasible solution  to (\ref{RGvon2}). Thus $\delta_*=t_*$ and the proof is complete.
\end{proof}

\begin{remark}  By replacing minimizations in the propositions with maximization we obtain analogous results based on which the following spectral maximin property can be proven, interchanging min and max over the simplex and spectraplex (an interchange whose proof in the standard maximin is superfluous),

$$\max_{X \in \mathbf{\Delta}_n} \min_{y \in S_m} \sum_{i=1}^m  y_iA_i \bullet X=\min_{y \in S_m} \max_{X \in \mathbf{\Delta}_n} \sum_{i=1}^m y_iA_i \bullet X.$$
\end{remark}

%\bigskip

\end{document}